\theoremstyle:=definition,remark,plain\do{%
        \expandafter\g@addto@macro\csname th@\theoremstyle\endcsname{%
            \addtolength\thm@preskip\parskip
            }%
        }
\newtheorem*{theorem*}{Theorem}
\newtheorem*{lemma*}{Lemma}
\newtheorem*{claim*}{Claim}
\theoremstyle{definition}
\newtheorem*{remark*}{Remark}
\newcommand{\cal}{\mathcal}
\newcommand{\conv}{\text{conv}}
\begin{document} 

\title[A colorful Goodman--Pollack--Wenger theorem]{A colorful Goodman--Pollack--Wenger theorem} 

\author{Andreas F. Holmsen}

\date{\today}

 \address{Andreas F. Holmsen, 
 Department of Mathematical Sciences, 
 KAIST, 
 Daejeon, South Korea.  \hfill \hfill }
 \email{andreash@kaist.edu}

\begin{abstract} 
Hadwiger's transversal theorem gives necessary and sufficient conditions for the existence of a line transversal to a family of pairwise disjoint convex sets in the plane. These conditions were subsequently generalized to hyperplane transversals to general families of convex sets in $\mathbb{R}^d$ by Goodman, Pollack, and Wenger. Here we show a colorful genealization of their theorem which confirms a conjecture of Arocha, Bracho, and Montejano. The proof is topological and uses the Borsuk--Ulam theorem.
\end{abstract}


\maketitle 

\section{Introduction}

One of the classical results of geometric transversal theory is Hadwiger's transversal theorem \cite{had57} which asserts the following: 

\begin{quote}
{\em A finite family of pairwise disjoint convex sets in the plane has a line transversal if and only if the members of the family can be linearly ordered such that any three members have a line transversal consistent with ordering.}
\end{quote}

Here a line transversal means a line that intersects every member of the family, and for three members $A$, $B$, $C$ to a have a line transversal consistent with the ordering means that if their relative order is $A\prec B \prec C$, then we can choose points $a\in A$, $b\in B$, $c\in C$ such that $b$ is contained in the segment spanned by $a$ and $c$.

\medskip

Hadwiger's transversal theorem has been generalized in a number of ways, eventually resulting in necessary and sufficient conditions for the existence of a {\em hyperplane transversal} to a finite family of convex sets in $\mathbb{R}^d$, meaning an affine hyperplane in $\mathbb{R}^d$ that intersects every member of the family. 

The first siginificant step towards these necessary and sufficient conditions was made by Goodman and Pollack \cite{eli}, who showed that when going to higher dimensions, the linear ordering in Hadwiger's theorem can be replaced by the {\em order type} of a set of points in $\mathbb{R}^{d-1}$. 
The next crucial step was made by Wenger \cite{wen90}, who discovered that the disjointness condition in Hadwiger's theorem is superfluous.  
The final step to arbitrary dimension, without any type of disjointness or separation condititions, was made by Pollack and Wenger \cite{wenger}. 
(We also note that Anderson and Wenger \cite{AW} subsequently showed that the order type can in fact be replaced by an {\em acyclic oriented matroid} of rank $d$.)
For more information on geometric transversals and Helly-type theorems, see e.g. \cite{eli-survey, hol-sur}. 

\medskip

The purpose of this note is to prove a so-called {\em colorful generalization} of the Goodman--Pollack--Wenger theorem. This was originally conjectured by Arocha, Bracho, and Montejano \cite{col-had}, who proved the planar case, i.e. a colorful version of Hadwiger's theorem. See also \cite{edgar} for some partial results.

There are a number of classical theorems in combinatorial convexity that admit colorful generalizations, originating with the colorful Carath{\'e}odory theorem due to B{\'a}r{\'a}ny \cite{colhel} (and the colorful Helly theorem due to Lov{\'a}sz). These generalizations are of significant interest because they often have important applications that can not be obtained from their classical ``black-and-white'' versions. See \cite[chapters 8--10]{dgbook} and \cite{bar-book} for further discussions, examples, and references. 

Here we consider the even more general notion of {\em matroid colorings}. In the setting of combinatorial convexity, such colorings were first introduced by Kalai and Meshulam \cite{KM05} who proved a topological Helly theorem for matroid colorings. The special case of a partition matroid gives the usual colorful version.

\subsubsection*{Notation and terminology} We write $\conv\hspace{.3ex} X$ to denote the convex hull of a subset $X\subset \mathbb{R}^d$, and for a family $F$ of subsets of $\mathbb{R}^d$, we write $\conv \hspace{.3ex} F$ to denote the convex hull of the union of the members in $F$. Matroids will be specified by their rank function, and all matroids are assumed to be loopless. For an abstract simplicial complex $K$ with vertex set $V$, and a subset $W\subset V$, we write $K[W]$ to denote the induced subcomplex on the vertex set $W$. The proof of our main result uses some topology, in particular the notions of $\mathbb{Z}_2$-complexes and $\mathbb{Z}_2$-maps, and we assume the reader is familiar with the terminology from Matou{\v s}ek's book \cite{bubook}.

\subsubsection*{Main result} We are ready to state our generalization of the Goodman--Pollack--Wenger theorem for hyperplane transversals.

\begin{theorem*}\label{thm:matroid}
Let $F$ be a finite family of convex sets in $\mathbb{R}^d$ and let $\mu: 2^F \to \mathbb{N}_0$ be the rank function of a matroid. Suppose, for some $0\leq k<d$, there is a function $\varphi: F \to \mathbb{R}^k$ such that 
\[\text{\em conv}\: G_1 \cap \text{\em \conv}\: G_2 = \emptyset \implies \text{\em conv}\: \varphi(G_1) \cap \text{\em \conv}\: \varphi(G_2) = \emptyset \tag{$\ast$}\]
whenever $G_1\cup G_2\subset F$ satisfies $\mu(G_1\cup G_2) = |G_1|+|G_2|$.
Then there exists a subfamily $G\subset F$, with $\mu(F\setminus G) \leq k+1$, which has a hyperplane transversal. 
\end{theorem*}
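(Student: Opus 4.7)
The strategy is to argue by contradiction using the Borsuk--Ulam theorem on the sphere $T \cong S^d$ of oriented affine hyperplanes of $\mathbb{R}^d$, compactified by the two degenerate ``hyperplanes at infinity'' and equipped with the free $\mathbb{Z}_2$-action of orientation reversal. For each $H \in T$ write $G^+(H), G^-(H) \subset F$ for the subfamilies lying strictly in the two open half-spaces of $H$. Since $H$ is a hyperplane transversal of $F \setminus (G^+(H) \cup G^-(H))$, the theorem's conclusion is equivalent to the existence of some $H \in T$ with $\mu(G^+(H) \cup G^-(H)) \leq k+1$. Supposing the contrary, that $\mu(G^+(H) \cup G^-(H)) \geq k+2$ for every $H \in T$, I aim to exhibit a $\mathbb{Z}_2$-equivariant map from $T$ to a $\mathbb{Z}_2$-space of index at most $k$, contradicting $\mathrm{ind}_{\mathbb{Z}_2}(S^d) = d > k$.

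For each $H$ the assumption lets us choose an independent subset $I \subset G^+(H) \cup G^-(H)$ of size $k+2$, split as $I = I^+ \sqcup I^-$ with $I^\pm \subset G^\pm(H)$. The hyperplane $H$ strictly separates $\conv I^+$ from $\conv I^-$ in $\mathbb{R}^d$ and $I^+ \cup I^-$ is matroid-independent, so $(\ast)$ yields $\conv \varphi(I^+) \cap \conv \varphi(I^-) = \emptyset$ in $\mathbb{R}^k$. Let $Y$ be the $\mathbb{Z}_2$-simplicial complex whose simplices are ordered disjoint pairs $(J^+, J^-)$ of subsets of $F$ such that $J^+ \cup J^-$ is independent in $\mu$ and $\conv \varphi(J^+) \cap \conv \varphi(J^-) = \emptyset$, with the free $\mathbb{Z}_2$-action swapping coordinates. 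The $\mathbb{Z}_2$-invariant open sets $U_{(J^+, J^-)} := \{H \in T : J^\pm \subset G^\pm(H)\}$, indexed by the simplices of $Y$, cover $T$ by the local construction above, and a standard equivariant partition-of-unity (or nerve map) argument then produces a continuous $\mathbb{Z}_2$-map $\Phi : T \to \|Y\|$.

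The Borsuk--Ulam contradiction then reduces to the index bound $\mathrm{ind}_{\mathbb{Z}_2}(Y) \leq k$, which is the crux of the proof. The natural strategy is to build a $\mathbb{Z}_2$-map from $\|Y\|$ into the sphere $S^k$ parameterizing oriented hyperplanes of $\mathbb{R}^k$, sending each simplex $(J^+, J^-)$ to the nonempty antipodally symmetric open cell of oriented hyperplanes strictly separating $\varphi(J^+)$ from $\varphi(J^-)$, via a nerve argument against the induced cover of $S^k$. The matroid hypothesis restricts $Y$ to independent pairs---precisely those to which $(\ast)$ applies---and the main technical obstacle I anticipate is verifying that after this restriction one still has a good cover of $S^k$ so that the nerve theorem produces a map of the required dimension. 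Resolving this will likely require a shelling- or collapsibility-type argument in the spirit of Kalai and Meshulam's topological matroid Helly theorem; a secondary routine technicality---ensuring the $U_{(J^+, J^-)}$ are genuinely open when the convex sets are unbounded---can be handled by an initial reduction to bounded members.
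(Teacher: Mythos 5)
Your overall architecture is the right one and matches the paper's: assume the conclusion fails, build a $\mathbb{Z}_2$-map from a sphere into a simplicial complex recording separated independent pairs, map that complex into $\mathbb{R}^{k+1}\setminus\{0\}\simeq S^k$ using $\varphi$, and contradict Borsuk--Ulam. But you have misplaced the difficulty. The step you flag as the crux --- the index bound $\mathrm{ind}_{\mathbb{Z}_2}(Y)\leq k$ --- is in fact the easy part and needs no nerve, shelling, or Kalai--Meshulam-style argument: after lifting $\varphi(F)$ into an affine hyperplane of $\mathbb{R}^{k+1}$ missing the origin and sending the vertex $(P,-)$ to $-\varphi(P)$, the defining condition $\conv\varphi(J^+)\cap\conv\varphi(J^-)=\emptyset$ says exactly that $0\notin\conv\bigl(\varphi(J^+)\cup(-\varphi(J^-))\bigr)$, so the affine extension of $\varphi$ is already a $\mathbb{Z}_2$-map $\|Y\|\to\mathbb{R}^{k+1}\setminus\{0\}$.

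The genuine gap is the step you dismiss as ``a standard equivariant partition-of-unity (or nerve map) argument'' producing $\Phi: T\to\|Y\|$. Your cover is indexed by the \emph{simplices} of $Y$, and the nerve of that cover is not $Y$: if $U_{(J_1^+,J_1^-)}\cap\cdots\cap U_{(J_m^+,J_m^-)}\neq\emptyset$, the union $\bigl(\bigcup_i J_i^+,\bigcup_i J_i^-\bigr)$ is still separated by some hyperplane but need \emph{not} be matroid-independent, so it need not be a simplex of $Y$; the vertex-indexed version of the partition-of-unity map fails for the same reason. This is exactly where the matroid hypothesis bites, and it is where the paper does all of its work: it linearizes to a complex $K$ on $\check F=F\cup(-F)$ in $\mathbb{R}^{d+1}$, takes the cell decomposition $\mathcal{C}$ of $S^d$ cut out by the orthogonal complements of the polytope vertices, restricts to the $(k+1)$-skeleton $L$ of its barycentric subdivision, and extends a map $\|L\|\to\|K\|$ skeleton by skeleton, using the monotonicity of the subfamilies $\check F(\sigma)$ along chains of cells together with the Bj{\"o}rner--Korte--Lov{\'a}sz theorem that the independence complex of a matroid of rank $>k+1$ is $k$-connected (the rank bound coming from your same observation that $F\setminus(G^+(H)\cup G^-(H))$ has a transversal). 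Note that this machinery only yields a map from $S^{k+1}$ into the complex, not from all of $T\cong S^d$ as you propose --- the available $k$-connectivity cannot support an extension past dimension $k+1$ --- but a $\mathbb{Z}_2$-map $S^{k+1}\to\mathbb{R}^{k+1}\setminus\{0\}$ already contradicts Borsuk--Ulam. As written, your proposal asserts the hard step without an argument and directs its effort at the step that is essentially already done.
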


\subsubsection*{Remark} One can interpret the rank function as a measure of the size of the subfamilies of $F$. So, roughly speaking, the theorem states that if all independent sets of the matroid satisfy $(\ast)$, then there is a relatively large subfamily $G$ that has a hyperplane transversal. (Note that if $\mu(F)\leq k+1$, then the theorem is trivially true with $G=\emptyset$.) 

We illustrate this interpretation by the following special case of the theorem. Suppose $F = F_1 \cup \cdots \cup F_{k+2}$ is a partition into nonempty parts, and define $\mu$ to be  the rank function of the corresponding partition matroid. 
The bases of the matroid are the {\em colorful $(k+2)$-tuples} of $F$, that is, the $(k+2)$-element subsets of $F$ which have exactly one member from each of the parts $F_i$. The condition $(\ast)$ asserts that any colorful $(k+2)$-tuple has Radon partitions that are consistent with the Radon partitions of its image under $\varphi$. Since this image lies in $\mathbb{R}^k$, there is at least one non-trivial Radon partition, and it follows that the colorful $(k+2)$-tuple admits a $k$-dimensional transversal which is consistent with the order type of its image under $\varphi$. Finally, note that the condition $\mu(F\setminus G)\leq k+1$ implies that $F_i\subset G$ for some $1\leq i \leq k+2$, and so the conclusion of the theorem asserts that one of the color classes $F_i$ has a hyperplane transversal. (We also note that if we set $F = F_1 = \cdots = F_{k+2}$, i.e. we take a disjoint union of $k+2$ copies of $F$,  then we get the original Goodman--Pollack--Wenger theorem.)

\section{Proof of the Theorem} Our goal is to show that the existence of a counterexample leads to a contradiction of the Borsuk--Ulam theorem.

We may assume that the members of $F$ are convex polytopes.
Indeed, if $B_1$, $\dots$, $B_\ell$, $C_1$, $\dots$, $C_m$ are convex sets such that \[\conv\{B_1, \cdots , B_\ell\} \cap \conv\{C_1,  \cdots , C_m\} \neq \emptyset,\]
then we can find points 
$b_1, \dots, b_\ell$, $c_1, \dots, c_m$, with $b_i\in B_i$ and $c_j\in C_j$, such that 
\[\conv \{b_1, \dots, b_k\} \cap \conv \{c_1, \dots, c_\ell\} \neq \emptyset.\]
This means that condition $(\ast)$ on the function $\varphi$ is witnessed by a finite set of points from each convex set in $F$. 

Therefore, if we replace each set in $F$ be the convex hull of its corresponding witness points, then we obtain a family of convex polytopes that satisfies the hypothesis of the theorem. Since each convex polytope is contained in its original member of $F$, a hyperplane transversal to a subfamily of the polytopes is also a hyperplane transversal to the corresponding subfamily of original members of $F$.

\medskip

The next step of the proof is to linearize the problem. This means that we consider $F$ as a finite family of convex polytopes living in an affine hyperplane in $\mathbb{R}^{d+1}\setminus\{0\}$. Similarly, we consider the image of the map $\varphi$ to live in an affine hyperplane in $\mathbb{R}^{k+1}\setminus \{0\}$. We define $\check{F} = F\cup (-F)$, where $-F = \{-P : P\in F\}$, and note that reflection through the origin is a  fixed-point-free involution on $\check{F}$.

The rank function $\mu$ is extended to a matroid on $\check{F}$ as follows: 
For a subfamily $G\subset \check{F}$, let $\tilde{G} = \{C\in F : C\in G \text{ or } -C\in G \}$, and define \[\mu(G) = \mu(\tilde{G}).\]

We leave it to the reader  to verify that this extends $\mu$ to a  the rank function of a matroid on $\check{F}$. (This is just a sequence of parallel extensions of our original matroid on $F$.) Note also that $\mu(\check{F} \setminus G) \geq \mu( F\setminus \tilde{G})$ for any subfamily $G\subset \check{F}$.

The function $\varphi$ is extended to $\check{F}$ by defining $\varphi(-P) = -\varphi(P)$ for every $P\in F$. This gives us an equivariant map  \[\varphi : \check{F} \to \mathbb{R}^{k+1}\setminus\{0\}.\]

We also need to linearize the 
hypothesis $(\ast)$ on the map $\varphi$. 
Observe that if $A$ and $B$ are compact convex sets contained in an affine hyperplane in $\mathbb{R}^n\setminus \{0
\}$, then 
\[A\cap B = \emptyset \iff 0 \notin \conv\big(A\cup (-B) \big).\]
Therefore, our equivariant map $\varphi: \check{F} \to \mathbb{R}^{k+1}$ satisfies  the condition
\[    0\notin \conv\: G \implies 0\notin \conv \: \varphi(G) \tag{$\check{\ast}$}\]
whenever $\mu(G) = |G|$.

\medskip

Finally, note that a subfamily $G\subset F$ has a $(d-1)$-transversal (an affine hyperplane transversal in $\mathbb{R}^d$) if and only if the subfamily $G \cup (-G) \subset \check{F}$ has a  hyperplane transversal through the origin in $\mathbb{R}^{d+1}$. 
The conclusion of the theorem can therefore be restated as:

\begin{quote}
{\em
There exists a subfamily $G\subset \check{F}$, with $\mu(\check{F}\setminus G)\leq k+1$, which has a hyperplane transversal through the origin in $\mathbb{R}^{d+1}$.}
\end{quote}

\medskip

Now that the problem has been linearized,  we can proceed with the proof of the Theorem. Define an abstract simplicial complex $K$ with vertex set $\check{F}$ by setting
\[K = \{G \subset \check{F} : {\mu}(G) = |G|, 0\notin \conv\: G \}.\]

Note that reflection through the origin in $\mathbb{R}^{d+1}$ induces a free $\mathbb{Z}_2$-action on $K$, which makes $K$ a free $\mathbb{Z}_2$-complex. (By a slight abuse of notation, we use ``$-$'' to denote the $\mathbb{Z}_2$-action on  $\mathbb{R}^{n}$ as well as on $K$.)

\begin{lemma*} Suppose ${\mu}(\check{F}\setminus G) > k+1$ for any subfamily $G\subset \check{F}$ that has a hyperplane transversal through the origin.
Then there exists a continuous $\mathbb{Z}_2$-map from $S^{k+1}$ to $||K||$.
\end{lemma*}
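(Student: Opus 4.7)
The plan is to combine an equivariant nerve construction on $S^d$ with the matroid rank hypothesis to show that the free $\mathbb{Z}_2$-complex $\|K\|$ is at least $k$-connected; a standard equivariant obstruction argument then produces the desired $\mathbb{Z}_2$-map $S^{k+1}\to \|K\|$.

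The first key observation is a stronger form of the hypothesis. For each $u\in S^d\subset \mathbb{R}^{d+1}$, let $G(u):=\{P\in \check{F}: P\subset \{x:\langle u,x\rangle>0\}\}$. The hyperplane $u^\perp$ is a transversal through the origin for $\check{F}\setminus(G(u)\cup G(-u))$, since every convex polytope not strictly on one side of $u^\perp$ must meet $u^\perp$. Applying the hypothesis gives $\mu(G(u)\cup G(-u))>k+1$. Because the extended matroid on $\check{F}$ is a sequence of parallel extensions of the original matroid, one has $\mu(G(u)\cup G(-u))=\mu(G(u))$, so $\mu(G(u))\geq k+2$ for \emph{every} $u\in S^d$. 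In particular $G(u)\neq \emptyset$, hence $\check{F}$ itself has no hyperplane transversal through the origin.

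Second, I would set up an equivariant nerve. Let $U_P:=\{u\in S^d: P\subset \{x:\langle u,x\rangle>0\}\}$; these open sets satisfy $U_{-P}=-U_P$ and by the previous step form a $\mathbb{Z}_2$-equivariant open cover of $S^d$. The nerve of this cover is precisely the "zero-free" complex $L:=\{G\subset \check{F}:0\notin \conv G\}$, and every nonempty intersection $\bigcap_{P\in \sigma}U_P$ is the set of unit normals of hyperplanes through $0$ strictly separating $0$ from $\conv\sigma$, which is open and spherically convex, hence contractible. The equivariant nerve theorem therefore gives $\|L\|\simeq S^d$ as $\mathbb{Z}_2$-spaces, and an equivariant partition of unity $\{\rho_P\}$ (obtained by symmetrizing so that $\rho_{-P}(-u)=\rho_P(u)$) produces an explicit $\mathbb{Z}_2$-map $h:S^d\to \|L\|$ sending each $u$ into the closed simplex on $G(u)$.

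The final step refines $h$ to land in $\|K\|\subset \|L\|$. Here the matroid rank hypothesis enters: for every $\sigma\in L$, the induced subcomplex $K[\sigma]$ is the matroid independence complex on $\sigma$, and when $\sigma=G(u)$ its rank is at least $k+2$, so by Bj\"orner's theorem $K[G(u)]$ is at least $k$-connected. Since every $\sigma\in L$ is a face of some $G(u)$ and the $\mathbb{Z}_2$-action on $L$ is free (any $\sigma\in L$ must satisfy $\sigma\cap(-\sigma)=\emptyset$, for otherwise $0\in \conv\sigma$), a matroid-colored enhancement of the nerve theorem in the spirit of Kalai--Meshulam should upgrade $\|L\|\simeq S^d$ to the statement that $\|K\|$ is at least $k$-connected. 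Equivariant obstruction theory on the free $\mathbb{Z}_2$-CW complex $\|K\|$ then builds the desired map $S^{k+1}\to \|K\|$ skeleton-by-skeleton, the obstructions in dimensions $\leq k+1$ vanishing by $k$-connectivity.

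The main obstacle is this upgrade from the ordinary nerve theorem on $\|L\|$ to its matroid-colored analog on $\|K\|$. A naive "pick a matroid basis of $G(u)$" selection is discontinuous, so the subtle point is to exploit simultaneously the spherical geometry of the cover and the $k$-connectivity of the local matroid complexes in order to equivariantly kill all obstructions up to dimension $k+1$. Making this rigorous is the technical heart of the lemma; in practice it can be carried out either by direct equivariant obstruction theory on the pair $(\|L\|,\|K\|)$ or by adapting the topological Helly theorem for matroid colorings of Kalai--Meshulam to the present equivariant setting.
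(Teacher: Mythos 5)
Your first step is correct and matches the paper: for every $u\in S^d$ the subfamily $\check{F}\setminus(G(u)\cup G(-u))$ meets $u^\perp$, so the hypothesis together with the parallel-extension structure of the matroid gives $\mu(G(u))>k+1$, and Bj\"orner--Korte--Lov\'asz then makes each induced matroid complex $K[G(u)]$ $k$-connected. These are indeed the two ingredients the paper uses. But the step that actually produces the map is missing. You propose to show that $\|K\|$ itself is $k$-connected by ``upgrading'' the equivariant nerve equivalence $\|L\|\simeq S^d$ via an unspecified ``matroid-colored enhancement of the nerve theorem,'' and you explicitly concede that making this rigorous is the technical heart of the lemma. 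That concession is exactly where the proof is: no such off-the-shelf equivariant Kalai--Meshulam statement is invoked or established, the Kalai--Meshulam theorem is a Leray-number/homology statement rather than a connectivity or $\mathbb{Z}_2$-index statement, and the global $k$-connectivity of $\|K\|$ is a strictly stronger claim than what is needed (one only needs $\mathrm{ind}_{\mathbb{Z}_2}(K)\geq k+1$) and is nowhere verified. The ``discontinuous basis selection'' problem you flag is real, and your proposal does not resolve it.

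The paper resolves it by discretizing rather than by a continuous nerve argument. It takes the arrangement of hyperplanes $v^\perp$ over all vertices $v$ of the polytopes, which cuts $S^d$ into an antipodally symmetric regular cell complex $\mathcal{C}$ on whose open cells the assignment $\sigma\mapsto\check{F}(\sigma)$ (your $G(u)$) is constant, and which is \emph{monotone} along chains $\sigma_1\subset\cdots\subset\sigma_{i+1}$ in the face poset: $\check{F}(\sigma_1)\subseteq\cdots\subseteq\check{F}(\sigma_{i+1})$. Taking $L$ to be the $(k+1)$-skeleton of the barycentric subdivision of $\mathcal{C}$ gives a $k$-connected free $\mathbb{Z}_2$-complex (hence a $\mathbb{Z}_2$-map $S^{k+1}\to\|L\|$ for free), and a $\mathbb{Z}_2$-map $\|L\|\to\|K\|$ is then built cell by cell: the boundary of a simplex indexed by a chain lands in $K[\check{F}(\sigma_{i+1})]$ by monotonicity and the induction hypothesis, and extends over the simplex because that subcomplex is $k$-connected; only $k+1$ extension steps are ever needed. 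This monotonicity along chains is precisely the combinatorial device that replaces your missing continuity argument, and without it (or a genuine substitute) your proposal does not constitute a proof.
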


\begin{proof}
Let $V$ denote the union of all the vertices of the polytopes in $\check{F}$. 
The collection of orthogonal complements $\{v^\perp\}_{v\in V}$ forms an arrangement of hyperplanes through the origin in $\mathbb{R}^{d+1}$, and its intersection with $S^d$ induces an antipodally symmetric regular cell decomposition of $S^d$ which we denote by $\cal C$. 

Define $L$ to be the $(k+1)$-skeleton of the barycentric subdivision of $\cal C$. This means that $L$ is a simplicial complex whose vertices  are the cells of $\cal C$, and whose maximal simplices are ordered chains of cells in $\cal C$
\[\sigma_0 \subset \sigma_1 \subset \cdots \subset \sigma_{k+1}\]
where $\dim \sigma_i < \dim \sigma_{i+1}$. (We also use ``$-$'' to denote the antipodailty action on $\cal C$ and $L$.) 
Since $L$ is the $(k+1)$-skeleton of an antipodally symmetric triangulation of $S^d$, it follows that $L$ is a $k$-connected $\mathbb{Z}_2$-complex.
Consequently there exists a continuous $\mathbb{Z}_2$-map from $S^{k+1}$ to $||L||$ (see e.g. \cite[Proposition 5.3.2(iv)]{bubook}), and so it suffices to construct a continuous $\mathbb{Z}_2$-map $f: ||L|| \to ||K||$. We define $f$ inductively from the $i$-skeleton of $L$ to $K$, for $i=0, 1, \dots, k+1$. 

For each cell $\sigma\in \cal C$ (or in other words, for each vertex of $L$) we associate a subfamily $\check{F}(\sigma)\subset \check{F}$ in the following way: Choose an arbitrary point $y\in \text{relint}(\sigma)$ and define 
\[\check{F}(\sigma) = \{P \in \check{F} : x\cdot y >0 \: \forall x\in P\}.\]
Note that $\check{F}(\sigma)$ does not depend on the choice of $y$, since a change in $\check{F}(\sigma)$ can only  occur when a member of $\check{F}$ becomes (or stops being) tangent to the hyperplane $x\cdot y = 0$, as $y$ varies continuously.  Equivalently, a change in $\check{F}(\sigma)$ can only occur when $y$ enters (or leaves) the orthogonal complement of one of the vertices of a member of $\check{F}$. This shows that $\check{F}(\sigma)$ is well-defined. Note also that  $\check{F}(-\sigma) = - \check{F}(\sigma) = \{-P : P\in \check{F}(\sigma)\}$ for any cell $\sigma\in \cal C$.

By the same reasoning as above, we see that for any ordered chain of cells in $\cal C$
\[\sigma_1 \subset \sigma_2 \subset \cdots \subset \sigma_{i} \] the associated subfamilies satisfy the following {\em monotonicity property}:
\[\check{F}(\sigma_1) \subseteq \check{F}(\sigma_2) \subseteq \cdots \subseteq \check{F}(\sigma_{i}).\]

Indeed, going from $\sigma_j$ to $\sigma_{j-1}$ means that we are adding additional constraints  in the form of additional vertices of polytopes being contained in the hyperplane $x\cdot y = 0$. Therefore we must have  $\check{F}(\sigma_{j-1})\subseteq \check{F}(\sigma_{j})$.

\smallskip

Next, we claim that for any cell $\sigma \in \cal C$, the induced subcomplex $K[\check{F}(\sigma)]$ is $k$-connected. Indeed, the subfamily $\check{F} \setminus \big(\check{F}(\sigma) \cup \check{F}(-\sigma)\big)$ has a hyperplane transversal through the origin, so by the hypothesis we have 
\[\mu\big(\check{F}(\sigma)\big) = \mu\big(\check{F}(\sigma) \cup \check{F}(-\sigma)\big)> k+1.\]
Thus $K[\check{F}(\sigma)]$ is a matroid complex of dimension $\geq k+1$, and it follows from \cite[Proposition 6.2]{BKL1985} that  $K[\check{F}(\sigma)]$ is $k$-connected. 

\medskip

We are ready to define the map $f$, starting from the $0$-skeleton of $L$. For each cell $\sigma \in \cal C$ (vertex of $L$) choose an arbitrary point $f(\sigma)$ in the induced subcomplex $K[\check{F}(\sigma)]$ such that it commutes with the antipodality action, that is, such that $f(-\sigma) = -f(\sigma)$. For the induction step we suppose that $f$ has been defined on the $(i-1)$-skeleton of $L$, for some $1\leq i \leq k+1$, such that 
\begin{itemize}
    \item $f$ commutes with the antipodality action, and
    \item the image of an $(i-1)$-simplex $\sigma_1 \subset \sigma_2 \subset \cdots \subset \sigma_i$ is contained in the induced subcomplex $K[\check{F}(\sigma_i)]$.
\end{itemize}

We will extend $f$ to the $i$-skeleton of $L$ using the $k$-connectedness of the induced subcomplexes $K[\check{F}(\sigma)]$. Consider an $i$-dimensional simplex of $L$
\[\sigma_1 \subset \sigma_2 \subset \cdots \subset \sigma_{i+1}.\] 
By the induction hypothesis and the monotonicity property, the boundary of this simplex is mapped by $f$ into the induced subcomplex $K[\check{F}(\sigma_{i+1})]$, which is $k$-connected. Therefore the map can be extended to a map from the entire simplex into $K[\check{F}(\sigma_{i+1})]$. Once this extension has been chosen, we extend the map on the andtipodal simplex 
\[-\tau_1 \subset -\tau_2 \subset \cdots \subset -\tau_{i+1}\]
so that it commutes with the $\mathbb{Z}_2$-action. The Lemma now follows by induction. 
\end{proof}

To complete the proof of the Theorem, we claim there is a continuous $\mathbb{Z}_2$-map from $||K||$ to $\mathbb{R}^{k+1}\setminus\{0\}$.
Indeed, the equivariant map $\varphi: \check{F} \to \mathbb{R}^{k+1}\setminus\{0\}$ can be viewed as a map from the vertices of $K$ to $\mathbb{R}^{k+1}\setminus\{0\}$, and 
condition $(\check{\ast})$ states that $0\notin \conv\: \varphi(G)$ whenever $G\subset \check{F}$ is a simplex of $K$. Thus the affine extension of $\varphi$ gives the desired map.

By composing the affine extension of $\varphi$ and the map from the Lemma  we get a continuous $\mathbb{Z}_2$-map from $S^{k+1}$ to $\mathbb{R}^{k+1}\setminus \{0\}$, which contradicts the Borsuk--Ulam theorem. \qed

\section{Acknowledgements}
Thank you to Minho Cho for corrections on an earlier version of this manuscript.

\end{document}